\documentclass[namedate,webpdf,imamat]{ima-authoring-template}

\graphicspath{{Fig/}}

\theoremstyle{thmstyletwo}
\newtheorem{theorem}{Theorem}[section]

\newtheorem{example}{Example}%
\newtheorem{remark}{Remark}%

\newtheorem{definition}{Definition}

\numberwithin{equation}{section}

\begin{document}

\DOI{XXXX}
\copyrightyear{XXXX}
\firstpage{1}


\title[Normalisation of Factorisation]{Normalisation of Wiener--Hopf Factorisation for Matrix Functions}

\author{Adukova N. V.*
\address{\orgdiv{Department of Mathematics}, \orgname{Aberystwyth University}, \orgaddress{{Penglais}, \postcode{SY23 3FL}}, \state{Ceredigion}, \country{UK}}}
\author{Adukov V. M.\ORCID{0000-0002-3594-3565}
\address{\orgdiv{Institute of Natural Sciences and Mathematics}, \orgname{South Ural State University}, \orgaddress{\street{Lenin prospekt}, \postcode{454080},  \country{Russia}}}}

\authormark{Adukova N. et al.}

\corresp[*]{Corresponding author: \href{email:email-id.com}{email:naa51@aber.ac.uk}}



\abstract{For a matrix function $A(t)$, we study the normalisation of its Wiener--Hopf factorisation $A(t)=A_-(t)D(t)A_+(t)$  which guarantees the uniqueness of the factors    $A_\pm(t)$. Previously, this problem was investigated for second-order matrix functions, the stable factorisation, and the factorisation of 
matrix functions with distinct partial indices. The main tools for solving the problem of normalisation are the theorem of I.C. Gohberg and M.G. Krein on a general form of the factors $A_\pm$, a Birkhoff factorisation of matrix functions, and a block PLU factorisation of the numerical matrix $A_-(\infty)$.
To solve the normalisation problem, a concept of P-normalised factorisation is introduced. The Birkhoff factorisation of matrix functions plays a key role in defining this concept. The permutation matrix $P$ is determined  by PLU factorisation of the numerical matrix $A_-(\infty)$.  We proved that any invertible matrix function $A(t)$ admits  P-normalisation. This normalisation guarantees the uniqueness of the Wiener--Hopf factorisation. The P-normalised factorisation of $A(t)$ allows us to find the Birkhoff factorisation of $A(t)$. Without normalisation, it is impossible to obtain explicit estimates of the absolute errors in the approximate computation of the factorisation factors.}
\keywords{Wiener--Hopf factorisation; Birkhoff factorisation; PLU factorisation; normalisation of the factorisation.}


\maketitle

\section*{Introduction}

  The Wiener--Hopf technique, or, more broadly, Wiener--Hopf factorisation of matrix functions is widely used in applied mathematics.  
 Two outstanding works \citep{Plemelj1908} and \citep{WH31} form the foundation of 
this technique.

The first work was  devoted to Hilbert's 21st problem (or Riemann--Hilbert Problem in the analytic theory of linear differential equations \citep{Plemelj64, AB94}). 

In 1857, B. Riemann formulated a problem on linear differential equations with rational coefficients. In 1900, D. Hilbert included this problem as number 21 in his famous list of unsolved mathematical problems \citep{Problems1900}. In connection with this problem B. Riemann first formulated the following homogeneous conjugation problem:

\emph{For the given closed  contour $\Gamma$ and a piecewise constant non-singular matrix function $G(t)$ defined on $\Gamma$ it is necessary to find a vector function $\Phi(z)$ that is analytic
on $\mathbb C \setminus \Gamma$ such that its limit values satisfy on $\Gamma$ the conjugation condition
\begin{equation}\label{conjug}
\Phi^+(t)=G(t)\Phi^-(t), \ t\in \Gamma.
\end{equation} 
}

B. Riemann  did not formulate precisely the conditions on $\Phi(z)$ and did not try to solve the problem. The first solution of the problem by reducing it to the Fredholm integral equation under stronger conditions on $\Gamma$ and $G(t)$ was given by D.~Hilbert. However, as N. I.~Muskhelishvili writes in his monograph 
\citep{Muskhelishvili11}, ``the solution given by D. Hilbert is very complicated and cannot be considered complete''. Nevertheless, the conjugation problem (\ref{conjug}) is now called \emph{the Riemann-Hilbert boundary value problem}.

The complete study of the homogeneous Riemann--Hilbert boundary value problem was carry out by J. Plemelj \citep{Plemelj1908}. He used Cauchy-type integrals and assumed that entries of $G(t)$ are H\"older functions. J. Plemelj obtained the general solution
of (\ref{conjug}) in terms of a so-called canonical system of solutions of (\ref{conjug}). 
Further research on this problem followed the path outlined by J. Plemely. 
At present, the theory of (\ref{conjug}) concerning the existence of a solution has been rigorously and comprehensively developed thanks to the works of N.~I.~Muskhelishvili and N.~P.~Vekua \citep{Muskhelishvili11, Vekua67}.  In the scalar case an elementary and quite effective solution based on Plemelj's ideas was obtained by F. D.~Gakhov \citep{Gakhov66} earlier than in the matrix case.


We now indicate how the Riemann--Hilbert boundary value problem is related to
the Wiener--Hopf factorisation of matrix functions. Let $\Gamma$ be a Jordan smooth
contour and $G(t)$ be a H\'older $n\times n$ non-singular matrix function on it. In the theory  
of (\ref{conjug}), it is proved (see \cite{Muskhelishvili11, Vekua67}) that there is a system of solutions $\Phi_1(z),\ldots,\Phi_n(z)$ such that 
$\det \left(\Phi_1(z),\ldots,\Phi_n(z)\right) \ne 0$ at any finite point of the complex plane $\mathbb C$. At infinity every solution $\Phi_j(z)$ has  finite order $k_j\in \mathbb Z$. Moreover, 
$\det \left(z^{k_1}\Phi_1(z),\ldots,z^{k_n}\Phi_n(z)\right)\ne 0, \infty$ at infinity.
This system is called \emph{a canonical system} for (\ref{conjug}), and integers
$k_1, \ldots, k_n$ are called \emph{the partial indices } of $G(t)$.
Now the conjugation condition (\ref{conjug}) gives
$$
\left(\Phi^+_1(t),\ldots,\Phi_n^+ (t)\right)=
G(t)\left(t^{k_1}\Phi_1^-(t),\ldots,t^{k_n}\Phi_n^-(t)\right)
{\rm diag}[t^{-k_1},\ldots,t^{-k_n}]     , \ t\in \Gamma.
$$
Denote
$$
G_+(t)=\left(\Phi^+_1(t),\ldots,\Phi_n^+ (t)\right), \quad 
G_-(t)=\left(t^{k_1}\Phi_1^-(t),\ldots,t^{k_n}\Phi_n^-(t)\right), 
$$
and $D(t)= {\rm diag}[t^{k_1},\ldots,t^{k_n}]$. The factors $G_\pm(t)$
have all properties necessary for factors from a Wiener--Hopf factorisation.
The precise definition of the Wiener--Hopf factorisation will be given in the next section.

We obtain the left Wiener--Hopf factorisation of $G(t)$ in the form
$$
G(t)=G_+(t)D(t)G_-^{-1}(t).
$$

Conversely, if we know the factorisation of $G(t)$, we can construct a canonical system of the conjugate problem \eqref{conjug}. Thus, the Riemann-Hilbert boundary value problem is equivalent to the Wiener--Hopf factorisation problem.

In 1931 the mathematical community  was amazed by the appearance of the paper of Norbert Wiener and Eberhard Hopf \citep{WH31}. This article became the basis of the Wiener--Hopf factorisation technique. The authors proposed an explicit method for solving an integral equation on a half-line with a kernel depending on the difference of the arguments. Before this article appeared, it was believed that exact analytical methods for solving such equations could not exist in principle. N. Wiener and E. Hopf 
unexpectedly refuted this belief. They applied the Fourier integral transform to their integral equation. As a result, they obtained a linear functional equation for two unknown functions, which must be carried out in some strip. It is assumed that one of the unknown functions is
analytic in the half-plane above the strip, and the other is below. To solve this functional equation they proposed to factorise the ratio of the coefficient of the given functional equation. This simple and elegant approach allowed them to solve the original integral equation. The key point of the method is the functional equation, now called the Wiener--Hopf functional equation. The technique based on this functional equation is called the Wiener--Hopf technique. 
Details of this technique can be found in \citep{Noble88}. See also the brief but very informative review \citep{LA07}. An essential part of the Wiener--Hopf technique is
the Wiener--Hopf factorisation of functions or matrix functions in the strip. 

 The appearance of the strip is caused by the conditions imposed on the kernel function of the integral equation. In 1941, E. Reissner \citep{Reissner41} weakened these conditions,
and in 1948 I. M. Rapoport \citep{Rapoport48} established a relationship between  
the Wiener--Hopf functional equation and the Riemann-Hilbert boundary value problem
(see also a modern work \citep{Kisil15a} on this subject). The final natural 
conditions, under which this class of integral equations or systems of equations is considered, were established by M. G. Krein and I. C. Gohberg in the fundamental works 
\citep{Krein62, GK60}. After these works, it became clear that the directions generated by works \citep{Plemelj1908} and \citep{WH31} are based on the same Riemann--Hilbert boundary value problem. In the first case, the problem is considered in the class of H\"older matrix functions, and in the second case  Wiener's class of matrix functions is used on the real line or the unit circle.

The further development of the theory is associated mainly with M. G. Krein and I. C. Gohberg \citep{Krein62, GK60}. They were the first to understand  that all concepts related to the canonical system are more conveniently formulate in terms of some factorisation of a matrix function. This factorisation is a key point in the Wiener--Hopf technique. For  this reason, the novel factorisation became known as the Wiener--Hopf factorisation of matrix functions.

The initial stage in the development of the factorisation theory was related to questions of the existence of factorisation, its stability, and the study of its general properties. I. C. Gohberg with co-workers established the necessary and sufficient conditions for factorisability of any matrix function with entries from a Banach algebra 
\citep{Gohberg64}. It has been established that partial indices of $A(t)$ are uniquely determined by the matrix function $A(t)$ \citetext{\citealp[Theorem~7.1]{GK60}; \citealp[Theorem~1.1]{GF74}}. In contrast to the partial indices,  the factors are not uniquely determined by $A(t)$. The general form of the factorisation factors is described by the Gohberg--Krein theorem \citetext{\citealp[Theorem~7.2]{GK60}; \citealp[Ch.~VIII, Theorem~1.2]{GF74}}. The stability of the factorisation with respect to small perturbations of the original matrix functions was also  studied. It turned out that the problem is unstable in the general case \citetext{\citealp[Theorem~10.2]{GK60}; \citealp{Bojarsky58b}}.  However, in some cases, a stability analysis of the factorisation problem can be performed \citep{Kisil15, MR16, MR18,AMR20,AA20}. 

Further, the focus of research shifted towards expanding the classes of factorisable functions and matrix functions \citep{Simonenko61, Simonenko64, LS87, BKS02}, 
and towards constructive methods in the factorisation theory. A review of available explicit and constructive factorisation methods can be found in 
\citep{Kisil13,GM13,RM16, ES20, KAMR21}. 

The Wiener--Hopf factorisation plays an important role in the study of many applied problems. It is difficult to provide  a full list of its applications. The problem arises in the soliton theory 
\citep{AS81, FT07, AM23, Zhou89}, the diffraction theory \citep{DL20}, the elasticity 
\citep{Abrahams02, AHKMNRS21, KAMR21}, the acoustics \citep{MAK26, ShM20}, the wave propagation and electromagnetics \citep{DZ14, DL20}.

Recently, considerable attention has also been devoted to diffraction problems in quarter-plane \citep{AS19, AA21, KAK25}. A distinctive feature of this research direction is the absence of an analogue of the Wiener--Hopf technique. At present, it remains unclear which method could serve a similar role in this setting.

Therefore, by the 1960s, the general theory of factorisation was basically completed.
Let us summarise  what has been achieved in the general theory of factorisation.
Necessary and sufficient conditions were found for any matrix function from a given Banach algebra to admit the standard Wiener--Hopf factorisation. It was established that factorisation factors are not uniquely determined and their general form was obtained. A necessary and sufficient condition for the stability of the partial indices has been found.

However, in the general factorisation theory the question of a normalisation of the factorisation remains unexplored. The general form of factorisation factors does not single out a unique factorisation. As a result, different constructive methods may produce different factorisation factors for the same matrix function and its non-uniqueness leads makes it impossible to compare  different constructive methods directly through their factorisation factors. Moreover, the lack of normalisation does not allow us to obtain explicit error estimates for approximate factorisations.

The purpose of the present paper is to fill this gap in the general factorisation theory.
We will introduce the concept of P-normalisation of the factorisation, which guarantees its uniqueness, and prove the existence of such a normalisation for any matrix function that admits the standard factorisation.
Such a normalisation makes it possible to compare different constructive methods, study the stability of the factorisation factors, and obtain explicit error estimates for approximate factorisations.

The paper is organised as follows. Section \ref{sec1} provides the preliminary background, including definitions and basic information on some factorisations of matrix functions and matrices that will be used for normalisation. We will need the Wiener--Hopf and Birkhoff factorisations of matrix functions and the block LU and PLU factorisations of numerical matrices. This section recalls the necessary facts concerning these factorisations.

In Section \ref{sec2}, we introduce the main concept of this paper, namely, the P-normalisation of the Wiener--Hopf factorisation $A(t)=A_-(t)D(t)A_+(t)$. Here P is a permutation matrix determined by the block LU factorisation of the matrix 
$A_-(\infty)$. 
The concept of P-normalisation was first defined in the talk by the authors at the 13th ISAAC Congress \citep{AA21b}.

Note that the permutation P is not uniquely defined since the matrix $A_-(\infty)$ can be reduced in various ways to the desired block LU factorisation. However, if we fix the choice  of
P then the P-normalised factorisation will be unique. This fact is proved in Section \ref{sec3}.

The existence of the P-normalised factorisation for any matrix function that admits
the Wiener--Hopf factorisation is proved in Section \ref{sec4}. This was previously done in some special cases: for second-order matrix functions \citep{Adukov22a}, for matrix functions with stable systems of partial indices \citep{Adukova22}, and for matrix functions with distinct partial indices \citep{Adukov22b}.

Conclusions and perspectives are discussed in Section \ref{sec5}.

\section{Preliminaries on some matrix factorisations}\label{sec1}
The main object of study in this work is the Wiener--Hopf factorisation of matrix functions. Our aim is to fill a gap in the general theory of factorisation, namely, to construct a normalisation of the factorisation that guarantees its uniqueness.
To carry out the normalisation, we must know a Wiener--Hopf factorisation of the given matrix function. The theorem of Gohberg--Krein on the general form of factorisation factors is one of the main tools in the normalisation process.  Our definition of normalisation is based on the Birkhoff factorisation of matrix functions and the block PLU factorisation of a numerical matrix.  

\subsection{Wiener--Hopf factorisation of matrix functions}
Let $\Gamma $ be a simple smooth closed contour in the complex plane $\mathbb C$ bounding the domain $D_{+}$. The complement of  $D_{+}\cup \Gamma $ in 
$\overline {\mathbb C} ={\mathbb C}\cup \{\infty\}$ will be denoted by $D_{-}$. We can assume that $0\in D_{+}$. Denote $\overline D_{\pm}=D_\pm \cup \Gamma$.

\begin{definition}\label{defWHfact}\citep[Ch.1, Sec.1]{CG81}
Let $A(t)$  be a continuous  and non-singular $p\times p$ matrix
function on the contour $\Gamma $.

{A standard right Wiener--Hopf factorisation} of $A(t)$ is  its
representation in the form
$$
A(t)=A_-(t)D(t)A_+(t), \ t\in \Gamma.
$$
Here $A_{\pm}(t)$ are continuous and non-singular matrix functions on
$\Gamma $ that admit analytic continuation into  $D_{\pm}$ and their
continuations $A_{\pm}(z)$ are continuous and non-singular in $\overline D_{\pm}$;
$D(t)={\rm diag}\,[t^{\rho_1},\ldots,t^{\rho_p}]$, where integers
 $\rho_1,\ldots,\rho_p$ are called {the right partial indices}
of $A(t)$.

If $\rho_1=\cdots=\rho_p=0$, then the factorisation is called {canonical}.
\end{definition}
A similar definition is given for the standard left factorisation of $A(t)$
in which the factors $A_\pm(t)$ are reversed. In this work we consider only the standard right factorisation. The term standard factorisation refers to  a factorisation of a continuous matrix function $A(t)$ with continuous factors $A_{\pm}(t)$.

The partial indices $\rho_1,\ldots,\rho_p$ are uniquely determined (up to the ordering) by the matrix function $A(t)$ \citep[Theorem~1.1]{GF74}. The following important relations hold for the sums of the partial indices:
$\rho_1+\ldots+\rho_p={\rm ind}_\Gamma\,\det A(t)$.
Here ${\rm ind}_\Gamma\,\det A(t)$ is the winding number of $\det A(t)$ with respect to the contour
$\Gamma$.

The partial indices may be arranged in an arbitrary order. 
Any reordering of the indices reduces to replacing the factor $D(t)$ by the diagonal matrix function
$P D(t) P^{-1}$, where $P$ is a permutation matrix.
For this reason, in Definition \ref{defWHfact}
the partial indices  may be assumed to be ordered in any convenient way. 
Throughout this work, we adopt the following ordering  of the partial indices:
$\rho_1 \leqslant \ldots \leqslant \rho_p$. Note that the ordering of the indices affects 
a form of the normalised factors.

In general, a matrix function $A(t)$ with continuous entries does
not admit the standard  Wiener--Hopf factorisation. Hence, the class of matrix functions under consideration must be clarified. In what follows, we consider the standard factorisation in decomposing Banach algebras. A detailed exposition of the factorisation theory in such algebras is presented in \citep{CG81,GKS03}.

Let $\mathfrak A$ be an
algebra of continuous functions on $\Gamma$ such that any non-singular
matrix function $A(t)\in G\mathfrak A^{p\times p}$
admits the Wiener--Hopf factorisation with the factors
$A_{\pm}(t)\in G\mathfrak A_\pm^{p\times p}$. Here $G\mathfrak
A^{p\times p}$ is the group of invertible elements of the algebra
$p\times p$ matrix functions with entries in $\mathfrak A$.  
The requirement $A_\pm(t)\in G{\mathfrak A}_\pm^{p\times p}(\Gamma)$ means that 
the factors belong to the algebra ${\mathfrak A}(\Gamma)$, admit analytic extensions 
to the domains ${D}_\pm$ and are non-singular on 
${\overline {D}}_\pm$. Thus, the standard factorisation of a matrix function from a Banach algebra 
is a factorisation with factors from the same algebra.

Basic examples of such algebras are the Wiener algebra $W(\mathbb T)$ (or
more generally, a decomposing $R$-algebra) and the algebra
$H_\alpha(\Gamma)$ of H\"older continuous functions on $\Gamma $
\citep[Ch.2]{CG81}.

For $\mathfrak A= W(\mathbb T)$ or $H_\mu(\Gamma)$ the scalar
problem can be solved explicitly. In the matrix case explicit
formulas for the factors $A_{\pm}(t)$ and the partial indices of an
arbitrary matrix function have not yet been obtained. A review of available explicit and constructive factorisation methods can be found in \citep{RM16,KAMR21,GM13}.

In contrast to the partial indices,  the factors $A_\pm(t)$ are not  determined uniquely by 
$A(t)$. The general form of the factors $A_\pm(t)$ is described by the Gohberg--Krein theorem \citep[Ch. VIII, Theorem 1.2]{GF74}. The formulation of the theorem depends on the adopted ordering of the partial indices. We reformulate the theorem according the adopted ordering.

Let $\{k_1,\ldots,k_s\}$ be  a partition of $p\in\mathbb N$, that is 
$k_1+\cdots+k_s=p$. Let us represent a matrix $A\in {\mathbb C}^{p\times p}$ in 
the block form 
$$
A=\begin{pmatrix}
A_{11}&A_{12}&\cdots&A_{1s}\\
A_{21}&A_{22}&\cdots&A_{2s}\\
\vdots&\vdots&\ddots&\vdots\\
A_{s1}&A_{s2}&\cdots&A_{ss}\\
\end{pmatrix},
$$
where the block $A_{ij}$ has the size $k_i\times k_j$. We say that $A$ has the block structure generated by the partition $\{k_1,\ldots,k_s\}$ and denote it as 
$A\in {\mathcal B}(k_1,\ldots,k_s)$. If $A, B \in {\mathcal B}(k_1,\ldots,k_s)$, then  
we can perform multiplication of these matrices using block multiplication and 
$A\cdot B \in {\mathcal B}(k_1,\ldots,k_s)$.

Let $\rho_1,\ldots,\rho_p$ be the set of the right partial indices, sorted in the ascending order: $\rho_1 \leq \ldots \leq \rho_p$. Assume that this set contains $s$ different numbers $\varkappa_1<\ldots<\varkappa_s$ of multiplicity $k_1,\ldots, k_s$, respectively. 
Let $\mathcal Q_-(\rho_1,\ldots,\rho_p)$ be the set of all block triangular matrix functions of the form
\begin{equation}\label{gQminus}
Q_-(t)=\begin{pmatrix}
Q_{11}&Q_{12}&\ldots&Q_{1s}\\
0&Q_{22}&\ldots&Q_{2s}\\
\vdots&\vdots&\ddots&\vdots\\
0&0&\ldots&Q_{ss}
\end{pmatrix}.
\end{equation}
Here the block $Q_{ij}$ has dimensions $k_i\times k_j$, the diagonal blocks $Q_{ii}$ are constant invertible $k_i\times k_i$ matrices, and the off-diagonal blocks $Q_{ij}(t)$ are matrix polynomials in the variable $t^{-1}$ of degree at most $\varkappa_j-\varkappa_i$.  Thus, the block structure of $Q_-(t)$ is generated by the multiplicity 
$k_1,\ldots, k_s$ of the different partial indices $\varkappa_1<\ldots<\varkappa_s$.
For brevity, we call such a block structure a structure generated by the partial indices. The set $\mathcal Q_-(\rho_1,\ldots,\rho_p)$ is a subgroup of the group 
$G{\mathfrak A}_-^{p\times p}(\Gamma)$.

Let $D(t) ={\rm diag}\left[t^{\rho_1},\ldots,t^{\rho_p}\right]={\rm diag}\left[t^{\varkappa_1}I_{k_1},\ldots,t^{\varkappa_s}I_{k_s}\right]$.
Define a matrix function
$$Q_+(t)= D^{-1}(t)Q_-^{-1}(t)D(t).$$
$Q_+(t)$ has the same form as (\ref{gQminus}), only in this case, $Q_{ij}(t)$ are matrix polynomials in $t$ of degree at most $\varkappa_j-\varkappa_i $. Thus, 
$Q_+(z)\in G{\mathfrak A}_+^{p\times p}(\Gamma)$.

\begin{theorem}[Gohberg--Krein]\label{genformGK}
Let $A(t)\in G{\mathfrak A}^{p\times p}(\Gamma)$ and
\begin{equation}\label{factWH} 
A(t)=A_-(t)D(t)A_+(t)
\end{equation}
be a standard Wiener--Hopf factorisation of the matrix function $A(t)$ with partial indices $\rho_1 \leq \ldots \leq \rho_p$. 
Then the representation
\begin{equation}\label{generalFact}
A(t)=G_-(t)D(t)G_+(t),
\end{equation}
where $G_-(t)= A_-(t)Q_-(t)$, $G_+(t)= Q_+(t)A_+(t)$, is also a Wiener--Hopf factorisation of $A(t )$ for any $Q_-(t)\in \mathcal Q_-(\rho_1,\ldots,\rho_p)$. Moreover, any factorisation of $A(t)$ can be obtained from the original factorisation in a similar way with an appropriate choice of $Q_-(t)$.
\end{theorem}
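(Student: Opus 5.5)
The proof has two parts: first check that $G_\pm$ are admissible factors, then show that every factorisation arises this way.

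\textbf{Admissibility of $G_\pm$.} Since $Q_+ = D^{-1}Q_-^{-1}D$, we have
$$G_-DG_+ = A_-Q_-D\,D^{-1}Q_-^{-1}D\,A_+ = A_-DA_+ = A.$$
It remains to check that $Q_\pm \in G\mathfrak A_\pm^{p\times p}$.

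For $Q_-$: its off-diagonal blocks are polynomials in $t^{-1}$, hence analytic in $D_-$ including $\infty$. It is block upper triangular with constant invertible diagonal blocks, so $\det Q_- = \prod_i \det Q_{ii}$ is a nonzero constant. Hence $Q_-$ is invertible on $\overline D_-$. Here we use $0\in D_+$, so $t^{-1}$ is analytic on $\overline D_-$.

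For $Q_+$: the inverse $Q_-^{-1}$ has the same block upper triangular form, again with polynomial entries in $t^{-1}$. Conjugating by $D$ multiplies block $(i,j)$ by $t^{\varkappa_j-\varkappa_i}$. For $j>i$ this exponent is at least the degree of the block in $t^{-1}$, so each block becomes a polynomial in $t$ of degree at most $\varkappa_j-\varkappa_i$. To justify the degree bound for $Q_-^{-1}$, observe that the set of matrices of form \eqref{gQminus} with these degree bounds is closed under multiplication, because $\deg \le (\varkappa_k-\varkappa_i)+(\varkappa_j-\varkappa_k)$. The inverse is obtained by block back-substitution, which stays in this class. Therefore $Q_+$ is entire, block triangular with the same constant diagonal blocks, so its determinant is constant and it is invertible on $\overline D_+$. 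Thus $G_\pm \in G\mathfrak A_\pm^{p\times p}$.

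\textbf{Every factorisation has this form.} Let $A = G_-DG_+$ be another factorisation. The partial indices are unique, so after a permutation we may take the same ordered $D$. Equating the two factorisations gives
$$A_-^{-1}G_- \, D = D \, A_+G_+^{-1}.$$
Set $Q_- := A_-^{-1}G_-$ and $Q_+' := A_+G_+^{-1}$. Entrywise this reads
$$(Q_-)_{\alpha\beta}(t)\,t^{\rho_\beta-\rho_\alpha} = (Q_+')_{\alpha\beta}(t), \qquad t\in\Gamma.$$
The left side extends analytically to $D_-$ except for a possible pole at $\infty$ of order at most $\rho_\beta-\rho_\alpha$ when this is positive. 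The right side is analytic in $D_+$. By the Liouville argument (continuity up to $\Gamma$ plus Morera/Painlevé gluing across $\Gamma$), the glued function is entire with polynomial growth of order $\max(0,\rho_\beta-\rho_\alpha)$. This gives three cases:
\begin{itemize}
\item If $\rho_\beta<\rho_\alpha$, the entire function vanishes at $\infty$, so $(Q_-)_{\alpha\beta}=0$.
\item If $\rho_\beta=\rho_\alpha$, it is constant.
\item If $\rho_\beta>\rho_\alpha$, it is a polynomial of degree at most $\rho_\beta-\rho_\alpha$ in $t$, so $(Q_-)_{\alpha\beta}$ is a polynomial in $t^{-1}$ of the same degree.
\end{itemize}
Hence $Q_-$ is block upper triangular of form \eqref{gQminus}. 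Its diagonal blocks are constant, and they are invertible because $\det Q_-\neq 0$ and the determinant of a block triangular matrix is the product of the diagonal block determinants. Therefore $Q_-\in\mathcal Q_-$, $G_- = A_-Q_-$, and
$$G_+ = D^{-1}Q_-^{-1}D\,A_+ = Q_+A_+.$$

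\textbf{Main obstacle.} The delicate step is the Liouville gluing across a general smooth contour, since the functions are only continuous up to $\Gamma$. I would invoke the standard fact that a function analytic on both sides of a rectifiable Jordan curve, with continuous matching boundary values, is analytic across it (Morera's theorem), and then apply the generalised Liouville theorem.
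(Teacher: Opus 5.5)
Your proof is correct: the decomposition $A_-^{-1}G_-D=DA_+G_+^{-1}$, the entrywise Liouville argument with the three cases, and the check that $\mathcal Q_-(\rho_1,\ldots,\rho_p)$ is closed under inversion (so that $Q_+=D^{-1}Q_-^{-1}D$ is a polynomial matrix in $t$ with constant nonzero determinant) are all sound. The paper does not prove this theorem itself but quotes it from \citep[Ch.~VIII, Theorem~1.2]{GF74} and \citep[Theorem~7.2]{GK60}; your argument is the standard one behind that citation, using the same Liouville-type comparison of two factorisations that the paper applies in the proof of Theorem~\ref{Bir}.
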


\subsection{Birkhoff factorisation}
The next type of factorisation of matrix functions that we need was introduced by G. Birkhoff \citep{Birkhoff1913} in connection with some problems of ordinary differential equations. We present the basic facts about this factorisation according to monograph \citep{CG81}.

\begin{definition}\label{Birkfacto} 
\emph{A right Birkhoff factorisation} of a matrix function 
$A(t)\in G{\mathfrak A}^{p\times p}(\Gamma)$ is its representation in the following form:
\begin{equation}\label{Birkfact}
A(t)=D_b(t)B_-(t)B_+(t), \ \ t\in \Gamma,
\end{equation}
where $B_\pm(t) \in G{\mathfrak A}_\pm^{p\times p}(\Gamma)$ and 
$D_b(t) ={\rm diag}\left[t^{\beta_1},\ldots,t^{\beta_p}\right]$. The integers
$\beta_1, \ldots, \beta_p$ are called {the right Birkhoff indices} of $A(t)$.
\end{definition}

In contrast to the partial indices, the Birkhoff indices are
not  determined  uniquely by the matrix function $A(t)$ (see the corresponding examples 
in \citep[p.133]{CG81}, \citep[p.22]{GKS03}).

I. S. Chebotaru \citep{Chebotaru67} showed that if a Wiener--Hopf factorisation 
$$A(t) =A_-(t)D(t)A_+(t)$$ 
is known, then one can always construct a Birkhoff factorisation of the form
$$
A(t) = PD(t)P^{-1}B_-(t)B_+(t),
$$ 
where $P$ is a permutation matrix.
Thus, the existence of a Wiener–Hopf factorisation implies the existence of a
Birkhoff factorisation and among all possible sets of Birkhoff indices,
there is always one obtained by a permutation of the right partial indices of
the Wiener--Hopf factorisation. However, not every set of Birkhoff indices can be
obtained in this way. These results are discussed in detail in 
\citep[Ch. 6, Sec. 2]{CG81}.

 \subsection{LU and PLU factorisations of matrices}
The final type of factorisation we need is a block PLU factorisation of a
numerical matrix. We start with a simpler case that is well known in connection with solving the system $Ax=b$ by Gaussian elimination.

\begin{definition}\label{defLU}
Let $A\in {\mathbb C}^{p\times p}$ be a non-singular $p\times p$ matrix. 

{LU factorisation} of $A$ is its representation in the following form
\begin{equation}\label{LU}
A=L\cdot U,
\end{equation}
where $L$ is the lower triangular matrix, and $U$ is the upper triangular matrix.
\end{definition}

Not every non-singular matrix $A$ has  the LU factorisation.
$A$ admits  the LU factorisation if and only if all upper left $k\times k$ blocks of $A$,
$1\leq k \leq p$, are non-singular \citep[Corollary~3.5.5]{HJ86}. If we fix the diagonal elements of the matrix $L$, then the LU factorisation will be unique. We will assume that all diagonal elements of $L$ are equal to $1$. 

In the general case, by permutation of rows of $A$, it is always possible to obtain that all upper left blocks become non-singular. It means that
there exists a  permutation matrix $P^{-1}$ such that $P^{-1}A$ admits 
the LU factorisation, i.e. $A$ is represented as $A=PLU$ \citep[Theorem~3.5.7]{HJ86}. 
This is the {PLU factorisation}  of $A$. In general,  the permutation P is non-unique in this representation of $A$. In the similar way we can define the {PUL factorisation}.

\begin{remark}
LU and UL factorisations can be considered as the simplest examples of the canonical Wiener--Hopf factorisations in an abstract decomposing algebra.

Let ${\mathfrak A}={\mathbb C}^{p\times p}$ be the algebra of all complex $p\times p$
matrices. The subalgebras ${\mathfrak A}_+={\mathbb C}^{p\times p}_+$,
${\mathfrak A}_-={\mathbb C}^{p\times p}_-$, 
${\mathfrak A}_-^0={\mathbb C}^{0,p\times p}_-$
are defined as the algebras of all lower triangular matrices,  all the upper triangular matrices, and all the upper triangular matrices with the zero diagonal, respectively.
Then ${\mathfrak A}$ is the decomposing algebra: ${\mathfrak A}={\mathfrak A}_-^0\oplus{\mathfrak A}_+$.

If $A\in {\mathfrak A}$, then 
$$
A=LU=A_+A_-, \qquad A=\widetilde{U}\widetilde{L}=\widetilde{A}_-\widetilde{A}_+
$$
can be considered as the canonical left/right Wiener--Hopf factorisation in 
the decomposing algebra ${\mathfrak A}$.
\end{remark}

\subsection{Block LU and block PLU factorisations of matrices}
Let $\{k_1,\ldots,k_s\}$ be  a partition of $p\in\mathbb N$, that is 
$k_1+\cdots+k_s=p$. Let a $p\times p$ complex matrix $A$ have the block structure generated by this partition: $A\in {\mathcal B}(k_1,\ldots,k_s)$, i.e.  
the block $A_{ij}$ has the size $k_i\times k_j$. 

\begin{definition}\label{defblockLU}
Let $A\in {\mathbb C}^{p\times p}$ be a non-singular $p\times p$ matrix. The {block LU factorisation} of $A\in {\mathcal B}(k_1,\ldots,k_s)$ is its representation in the following form
\begin{equation}\label{blockLU}
A=L\cdot U,
\end{equation}
where $L, U \in {\mathcal B}(k_1,\ldots,k_s)$, and $L$ is the lower block triangular matrix,  $U$ is the block upper triangular matrix.
For the partition $\{1,\ldots,1\}$ we get the ordinary LU factorisation.
\end{definition}

Not every non-singular matrix $A$ has the block  LU factorisation.
A matrix $A$ admits the block LU factorisation if and only if all upper left sub-matrices of $A$ of sizes $k_1\times k_1, (k_1+k_2)\times (k_1+k_2),\ldots, (k_1+k_2+\cdots + k_s)\times 
(k_1+k_2+\cdots +k_s)$ are non-singular \citep[Item 11.14]{VK84}. If we fix the diagonal elements of the matrix $L$, then the block LU factorisation will be unique. We will assume that the diagonal elements of $L$ are equal to $I_{k_1},\ldots,I_{k_s}$. Here $I_k$ is an identity matrix of order $k$.

If  $A$ does not admit the block LU factorisation, then by permutation of rows of $A$, it is always possible to obtain that all corresponding submatrices of $A$ become non-singular. It means that there exists a  permutation matrix $P^{-1}$ such that $P^{-1}A$ admits the block LU factorisation, i.e. $A$ is represented as $A=PLU$. 
This is {the block PLU factorisation} of $A$.
 
Let us emphasise that $A$ admitting the block PLU factorisation with the given permutation matrix $P$ means that P is chosen in such a way that
all upper left sub-matrices of $P^{-1}A$ of sizes $k_1\times k_1, (k_1+k_2)\times (k_1+k_2),\ldots, (k_1+k_2+\cdots + k_s)\times (k_1+k_2+\cdots +k_s)$ are non-singular.

 \begin{remark}
During a normalising process of an original factorisation $A(t)=A_-(t)D(t)A_+(t)$
we will use the block PLU factorisation of the matrix $A_-(\infty)$. The matrix is endowed with the block structure generated by the  partial indices. If $A(t)$ has the distinct partial indices then the partition is $\{1,\ldots,1\}$ and we get the PLU factorisation of matrix $A_-(\infty)$. 
\end{remark}

\section{Definition of P-normalised factorisation}\label{sec2}
 First, let us clarify what we mean by normalisation at infinity. This normalisation is not assumed to yield a unique factorisation. Thus, it is an intermediate normalisation on the way to the final one, which guarantees uniqueness.

\begin{definition}\label{Nor}
The transition from the original factorisation~(\ref{factWH}) to the 
factorisation~(\ref{generalFact}) by using any matrix function 
$Q_-(t)\in \mathcal Q_-(\rho_1,\ldots,\rho_p) $ will be called {the  normalisation of the factorisation (\ref{factWH}) at infinity}. The matrix function $Q_-(t)$ is called the {normalisation matrix}.
\end{definition}

Thus, the normalisation of the factorisation at infinity is determined by the choice of 
$Q_-(t)\in \mathcal Q_-(\rho_1,\ldots,\rho_p)$. Since
$\mathcal Q_-(\rho_1,\ldots,\rho_p)$ is a group,  
we can perform the normalisation in several steps, choosing  at each step the required normalisation matrix.

Our task is to produce in some sense a canonical choice of $Q_-(t)$.
The main condition that determines the choice of a canonical representative 
$Q_-(t)\in \mathcal Q_-(\rho_1,\ldots,\rho_p)$
will be related to the Birkhoff factorisation of the matrix function $A(t)$.

\begin{definition}\label{canonPnorm}
Let $P$ be any  permutation matrix of order $p$.
The Wiener--Hopf factorisation of the matrix function $A(t)$:
\begin{equation}\label{rcanfact}
A(t)=C_-(t)D(t)C_+(t)
\end{equation}
is called {P-normalised} if the following conditions are fulfilled:
\begin{enumerate}
\item The matrix function $B_-(t)=PD^{-1}(t)P^{-1} C_-(t)D(t)$ belongs to the algebra 
$\mathfrak{A}_{-}^{p\times p}(\Gamma)$, 
\item $B_-(\infty)=P$.
\end{enumerate}

We will consider P-normalisation  as {canonical normalisation} of the factorisation.
\end{definition}

This definition does not specify how the permutation P should be chosen.
We clarify this question in the next section.

\section{Uniqueness of P-normalised factorisation}\label{sec3}
In this section, we prove that if a $P$-normalised factorisation exists, then it is unique.



\begin{theorem}\label{Bir}
Let $A(t)=A_{-}(t)D(t)A_{+}(t)$ be a known factorisation of a matrix function $A(t)$. For the matrix $A_-(\infty)$, fix a permutation $P$ such that $A_-(\infty)$ admits a block PLU factorisation. 

Let the matrix function $A(t)\in G{\mathfrak A}^{p\times p}(\Gamma)$ admit a $P$-normalised Wiener--Hopf factorisation:
$$
A(t)=C_-(t)D(t)C_+(t).
$$
Then
\begin{enumerate}
\item This Wiener--Hopf factorisation generates a Birkhoff factorisation by the formula
\begin{equation}\label{Bir_form}
A(t)=PD(t)P^{-1}B_-(t)B_+(t),
\end{equation}
where $B_-(t)=PD^{-1}(t)P^{-1} C_-(t)D(t)$, $B_+(t)=C_+(t)$.

\item The given $P$-normalised Wiener--Hopf factorisation is unique.
\end{enumerate}
\end{theorem}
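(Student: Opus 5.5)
Part 1 is a direct verification, and part 2 reduces, via the Gohberg--Krein theorem, to a triangularity argument on the normalisation matrix.

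\textbf{Part 1.} We have $PD(t)P^{-1}B_-(t)B_+(t)=C_-(t)D(t)C_+(t)=A(t)$ identically. By condition 1 of Definition \ref{canonPnorm}, $B_-\in\mathfrak A_-^{p\times p}(\Gamma)$. It remains to check that $B_-$ is invertible in this algebra. Its determinant is
$$\det B_-(t)=\det C_-(t),$$
since the factors $PD^{-1}P^{-1}$ and $D$ have reciprocal determinants. Hence $\det B_-$ does not vanish on $\overline D_-\setminus\{\infty\}$, and $B_-(\infty)=P$ is invertible. So $B_-\in G\mathfrak A_-^{p\times p}$. Also $B_+=C_+\in G\mathfrak A_+^{p\times p}$, so (\ref{Bir_form}) is a Birkhoff factorisation with indices given by a permutation of the $\rho_j$.

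\textbf{Part 2.} Let $A=C_-DC_+=\widetilde C_-D\widetilde C_+$ be two $P$-normalised factorisations. By Theorem \ref{genformGK}, $\widetilde C_-=C_-Q_-$ for some $Q_-\in\mathcal Q_-(\rho_1,\dots,\rho_p)$. Then
$$\widetilde B_-=PD^{-1}P^{-1}C_-Q_-D=B_-\,(D^{-1}Q_-D).$$
Both $B_-$ and $\widetilde B_-$ lie in $G\mathfrak A_-^{p\times p}$, so $R(t):=D^{-1}Q_-D=B_-^{-1}\widetilde B_-$ is analytic in $D_-$, including at $\infty$, with $R(\infty)=P^{-1}P=I$.

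Now compute $R$ blockwise: $R_{ij}(t)=t^{\varkappa_j-\varkappa_i}Q_{ij}(t)$. For $i<j$, $Q_{ij}$ is a polynomial in $t^{-1}$ of degree at most $\varkappa_j-\varkappa_i$, so $R_{ij}$ is a polynomial in $t$ of degree at most $\varkappa_j-\varkappa_i$. For $i=j$, $R_{ii}=Q_{ii}$ is constant, and $R_{ij}=0$ for $i>j$. Boundedness at $\infty$ forces each $R_{ij}$ with $i<j$ to be constant, namely its $t^0$ coefficient. The condition $R(\infty)=I$ then gives $R=I$, hence $Q_-=I$, so $\widetilde C_-=C_-$ and $\widetilde C_+=D^{-1}C_-^{-1}A=C_+$.

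\textbf{Main obstacle.} The key point is making sure boundedness of $R$ at infinity really follows, i.e.\ that $B_-^{-1}$ is analytic at $\infty$. This rests on $B_-(\infty)=P$ being invertible, which is exactly what the normalisation provides. In this argument the choice of $P$ through the block PLU factorisation of $A_-(\infty)$ is not needed for uniqueness; it matters only for existence, which is treated later.
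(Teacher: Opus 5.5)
Your proof is correct, but your Part~2 follows a different route from the paper's. The paper does not use the Gohberg--Krein theorem for uniqueness. It writes both factorisations in Birkhoff form with the same left factor $PD(t)P^{-1}$, which gives $\widetilde B_-^{-1}B_-=\widetilde B_+B_+^{-1}$ on $\Gamma$. Liouville's theorem then makes this matrix constant, and $B_-(\infty)=\widetilde B_-(\infty)=P$ makes it $I$.

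You instead parametrise the second factorisation as $\widetilde C_-=C_-Q_-$. You then show that $R=D^{-1}Q_-D$ (which is $Q_+^{-1}$, a block upper triangular matrix polynomial in $t$) is bounded at $\infty$, hence constant and equal to $I$. This is Liouville's theorem specialised to polynomials.

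What each approach buys:
\begin{itemize}
\item The paper's argument is shorter and self-contained. It is the standard uniqueness argument for a factorisation with a fixed middle factor normalised at a point, and it needs no knowledge of the structure of $\mathcal Q_-(\rho_1,\ldots,\rho_p)$.
\item Yours relies on the heavier Gohberg--Krein theorem. In return, it shows explicitly what each normalisation condition removes. Condition~1 alone already forces $D^{-1}Q_-D$ to be a constant block upper triangular matrix, so in each block $Q_{ij}$ with $i<j$ only the $t^{-\varkappa_{ji}}$ term survives. Condition~2 then kills that constant.
\end{itemize}

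Your Part~1 is more careful than the paper's. The paper simply asserts $B_-\in G\mathfrak A_-^{p\times p}$ from condition~1, which only gives membership in $\mathfrak A_-^{p\times p}$. Your identity $\det B_-=\det C_-$ on $\overline D_-\setminus\{\infty\}$ is exactly what justifies invertibility. By continuity at $\infty$, this identity also shows that $B_-(\infty)$ is invertible without using condition~2. So your closing remark that this invertibility is what the normalisation provides overstates things slightly: condition~1 already suffices, and condition~2 is only needed to pin down the constant.
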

\begin{proof} \, By the first condition of Definition~ \ref{canonPnorm}, we have $B_-(t)\in GW_{-}^{p\times p}(\mathbb{T})$. Then relation \eqref{Bir_form} is verified directly.

Let us prove the uniqueness of the $P$-normalised Wiener--Hopf factorisation.
Assume that $A(t)=\widetilde C_-(t)D(t)\widetilde C_+(t)$ is another $P$-normalised factorisation of $A(t)$ and
$$A(t)=PD(t)P^{-1} \widetilde B_-(t)\widetilde B_+(t),$$ is the corresponding Birkhoff factorisation.
Then $\widetilde B_-^{-1}(t)B_-(t)= \widetilde B_+(t)B_+^{-1}(t)$, and, therefore, by Liouville's theorem, this matrix function is a constant invertible matrix. Hence, $\widetilde B_-^{-1}(t)B_-(t)=\widetilde B_-^{-1}(\infty)B_-(\infty)=I_p$, due to the second condition of Definition \ref{canonPnorm}. Thus, $\widetilde B_-(t)=B_-(t)$ and $\widetilde C_-(t)=C_-(t)$, $\widetilde C_+(t)=C_+(t)$ .
\end{proof}

\begin{remark}
The condition $B_-(\infty)=P$, which ensures the uniqueness of the $P$-normalised factorisation, can be replaced by $B_-(\infty)= A_0$, where $A_0$ is any invertible matrix. The condition $B_-(\infty)=P$ allows us to obtain a simplest form of the factors $C_-(t)$, $B_-(t)$ in $P$-normalised factorisations.
\end{remark}

We now prove that any Wiener--Hopf factorisation can be used to determine the permutation matrix $P$.



\begin{theorem}\label{sness}
Let a matrix function $A(t)$ admit 
the $P$-normalised factorisation for some permutation matrix $P$. Then, for any Wiener--Hopf factorisation
$$A(t)=A_-(t)D(t)A_+(t),$$
the matrix $A_-(\infty)$ admits a block PLU factorisation. Here block structure of
$A_-(\infty)$ is generated by the right partial indices of $A(t)$.
\end{theorem}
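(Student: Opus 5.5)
The plan is to read off from the $P$-normalised factorisation that $C_-(\infty)$ itself has the form $PL$. Then Theorem~\ref{genformGK} transfers this to $A_-(\infty)$ for any other factorisation, at the cost of a block upper triangular factor.

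\emph{Step 1.} Let $A(t)=C_-(t)D(t)C_+(t)$ be the $P$-normalised factorisation, and put $N(t)=P^{-1}C_-(t)$, which lies in $G\mathfrak A_-^{p\times p}(\Gamma)$. Conditions 1 and 2 of Definition~\ref{canonPnorm} say that
$$
P^{-1}B_-(t)=D^{-1}(t)N(t)D(t)
$$
belongs to $\mathfrak A_-^{p\times p}(\Gamma)$ and equals $I_p$ at infinity. Its $(i,j)$ entry is $t^{\rho_j-\rho_i}N_{ij}(t)$. Write $N_{ij}(t)=N_{ij}(\infty)+O(t^{-1})$ as $t\to\infty$, and split by the sign of $\rho_j-\rho_i$.
\begin{itemize}
\item If $\rho_j>\rho_i$, boundedness at infinity forces $N_{ij}(\infty)=0$.
\item If $\rho_j<\rho_i$, the entry tends to $0$ whatever $N_{ij}(\infty)$ is.
\item If $\rho_i=\rho_j$, the value at infinity is $N_{ij}(\infty)$, and it must equal $\delta_{ij}$.
\end{itemize}
The indices are ordered ascending, so $\rho_j>\rho_i$ corresponds exactly to $(i,j)$ lying in a block strictly above the diagonal in the block structure ${\mathcal B}(k_1,\ldots,k_s)$ generated by the partial indices. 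Hence $L:=N(\infty)=P^{-1}C_-(\infty)$ is block lower triangular with diagonal blocks $I_{k_1},\ldots,I_{k_s}$, and so $C_-(\infty)=PL$. This entrywise bookkeeping, matching the conditions on $\rho_j-\rho_i$ to positions in the block structure, is the only delicate point.

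\emph{Step 2.} Take an arbitrary Wiener--Hopf factorisation $A(t)=A_-(t)D(t)A_+(t)$. Its partial indices are the same, so we may order them identically. By the second part of Theorem~\ref{genformGK}, there is $Q_-(t)\in\mathcal Q_-(\rho_1,\ldots,\rho_p)$ with $C_-(t)=A_-(t)Q_-(t)$.

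Evaluate $Q_-$ at infinity. Its off-diagonal blocks $Q_{ij}$ with $i<j$ are polynomials in $t^{-1}$, so they reduce to their constant terms, and the blocks below the diagonal vanish. Therefore $Q_-(\infty)$ is a block upper triangular matrix whose diagonal blocks $Q_{ii}$ are invertible, so $Q_-(\infty)$ is invertible. Its inverse $U:=Q_-(\infty)^{-1}$ is again block upper triangular in ${\mathcal B}(k_1,\ldots,k_s)$.

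\emph{Step 3.} Combining the two steps,
$$
A_-(\infty)=C_-(\infty)Q_-(\infty)^{-1}=PLU .
$$
This is a block PLU factorisation of $A_-(\infty)$, with the block structure generated by the partial indices and with the same permutation $P$.

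Equivalently, all leading principal block submatrices of $P^{-1}A_-(\infty)=LU$ are non-singular. Indeed, the leading block of $LU$ of each size $k_1+\cdots+k_m$ is the product of the corresponding leading blocks of $L$ and $U$, and both of these are invertible. This completes the proof.
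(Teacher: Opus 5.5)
Your proposal is correct and follows essentially the same route as the paper. Both arguments get block lower triangularity of $P^{-1}C_-(\infty)$ from condition 1 of Definition~\ref{canonPnorm}, then transfer it to an arbitrary factorisation via Theorem~\ref{genformGK}, using that $Q_-(\infty)$ is block upper triangular. The differences are cosmetic. You write $C_-=A_-Q_-$ and invert $Q_-(\infty)$, whereas the paper writes $A_-=C_-Q_-$ directly. You also use condition 2 to fix the diagonal blocks of $L$ as identities, which makes your final non-singularity check immediate; the paper does not spell that check out.
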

\begin{proof} \, Suppose that the matrix function $A(t)$ admits a $P$-normalised factorisation
$$
A(t)=C_-(t)D(t)C_+(t),
$$
where $P$ is some permutation matrix. First we prove that for this factorisation $C_-(\infty)$ admits a block PLU factorisation, where the block structure is generated by the right partial indices of $A(t)$. 

Let $\varkappa_1<\ldots<\varkappa_s$ be the distinct partial indices of $A(t)$ and 
$k_1,\ldots, k_s$ their multiplicities. Thus
$$
D(t)=\begin{pmatrix}
t^{\varkappa_1}I_{k_1}&0&\cdots&0\\
0&t^{\varkappa_2}I_{k_2}&\cdots&0\\
\vdots&\vdots&\ddots&\vdots\\
0&0&\cdots&t^{\varkappa_s}I_{k_s}
\end{pmatrix}.
$$

By the first condition of Definition~\ref{canonPnorm},
the following inclusion holds:
\begin{equation}\label{condi1}
PD^{-1}(t)\bigl(P^{-1} C_-(t)\bigr)D(t)\in \mathfrak{A}_{-}^{p\times p}(\Gamma).
\end{equation}
Denote
$$
L_-(t)=P^{-1} C_-(t)\in \mathfrak{A}_{-}^{p\times p}(\Gamma).
$$
Define $\widetilde{L}_-(t):= D^{-1}(t)L_-(t)D(t)$. Then $\widetilde{L}_-(t) \in \mathfrak{A}_{-}^{p\times p}(\Gamma)$ by virtue (\ref{condi1}) and 
$${L}_-(t)= D(t)\widetilde{L}_-(t)D(t).
$$

Partition $\widetilde {L}_-(t)$ into blocks: 
$$
\widetilde {L}_-(t)=\begin{pmatrix}
L_{11}^-(t)&L_{12}^-(t)&\cdots&L_{1s}^-(t)\\
L_{21}^-(t)&L_{22}^-(t)&\cdots&L_{2s}^-(t)\\
\vdots&\vdots&\ddots&\vdots\\
L_{s1}^-(t)&L_{s2}^-(t)&\cdots&L_{ss}^-(t)
\end{pmatrix},
$$
where the block $L_{ij}^-(t)\in {\mathfrak A}_-^{k_i \times k_j}$.

Then we have
$$
L_-(t)=D(t){\widetilde L}_-(t)D^{-1}(t)=
\begin{pmatrix}
L_{11}^-(t)&t^{-\varkappa_{21}}L_{12}^-(t)&\cdots&t^{-\varkappa_{s1}}L_{1s}^-(t)\\
t^{\varkappa_{21}}L_{21}^-(t)&L_{22}^-(t)&\cdots&t^{-\varkappa_{s2}}L_{2s}^-(t)\\
\vdots&\vdots&\ddots&\vdots\\
t^{\varkappa_{s1}}L_{s1}^-(t)&t^{\varkappa_{s2}}L_{s2}^-(t)&\cdots&L_{ss}^-(t)
\end{pmatrix}.
$$
Here $\varkappa_{ji}=\varkappa_j-\varkappa_i>0$ for $j>i$.

The condition $L_-(t)\in \mathfrak{A}_{-}^{p\times p}(\Gamma)$
implies that $t^{\varkappa_{ij}}L_{ij}^-(t):={\widetilde L}_{ij}^-(t) \in {\mathfrak A}_-^{k_i\times k_j}$ for $i>j$.
Consequently, 
$$
L_0:=L_-(\infty)=\begin{pmatrix}
L_{11}^-(\infty)&0&\cdots&0\\
{\widetilde L}_{21}^-(\infty)&{L}_{22}^-(\infty)&\cdots&0\\
\vdots&\vdots&\ddots&\vdots\\
{\widetilde L}_{s1}^-(\infty)&{\widetilde L}_{s2}^-(\infty)&\cdots&L_{ss}^-(\infty)
\end{pmatrix},
$$
is block lower triangular matrix. Hence,
$$
C_-(\infty)=PL_0,
$$
that is, for the $P$-normalised factorisation $$
A(t)=C_-(t)D(t)C_+(t),
$$
the matrix $C_-(\infty)$ admits a block PLU factorisation.

Further, by Theorem~\ref{genformGK},
for any Wiener--Hopf factorisation
$$
A(t)=A_-(t)D(t)A_+(t)
$$
the following representation holds:
$$
A_-(t)=C_-(t)Q_-(t),
$$
where $Q_-(t)$ has the form~(\ref{gQminus}). In particular,
$$
A_-(\infty)=C_-(\infty)Q_-(\infty).
$$
Denote $U_0:=Q_-(\infty)$. Then $U_0$ is an upper
block triangular matrix.

Thus,
$$
A_-(\infty)=P\bigl(P^{-1}C_-(\infty)\bigr)Q_-(\infty)
=PL_0U_0,
$$
that is, the matrix $A_-(\infty)$ admits a block PLU factorisation.
\end{proof}

\section{Existence of P-normalised factorisation}\label{sec4}





The following theorem gives the main result of the paper. Let 
$\mathfrak A$ be an algebra of continuous functions on $\Gamma$ such that any matrix function $A(t)\in G\mathfrak A^{p\times p}$ admits the Wiener--Hopf factorisation with the factors $A_{\pm}(t)\in G\mathfrak A_\pm^{p\times p}$.
Recall that a block structure of matrices is understood as a structure generated by 
the right partial indices $\varrho_1, \dots, \varrho_p$ of $A(t)$.

\begin{theorem}\label{basethm}  
Let $A(t)\in G\mathfrak A^{p\times p}$.  $A(t)$ admits a $P$-normalised right Wiener--Hopf factorisation if and only if for some factorisation
$A(t) = A_-(t) D(t) A_+(t)$
the matrix $A_-(\infty)$ admits the block PLU factorisation for the given permutation 
$P$.

If this condition is fulfilled then the P-normalised
factorisation 
\begin{equation}\label{canonnormfact}
A(t) = C_-(t) D(t)C_+(t) 
\end{equation}
has the form
\begin{equation}\label{canonnormAm}
C_-(t) = P
\begin{pmatrix}
I_{k_1} + t^{-1}c_{11} 
& t^{-\varkappa_{21}-1} c_{12}  
& \cdots 
& t^{-\varkappa_{s-1,1}-1} c_{1,s-1}
& t^{-\varkappa_{s1}-1} c_{1s} \\
c_{21} 
& I_{k_2} + t^{-1}c_{22}  
& \cdots 
& t^{-\varkappa_{s-1,2}-1} c_{2,s-1}
& t^{-\varkappa_{s2}-1} c_{2s} \\
\vdots & \vdots & \ddots & \vdots & \vdots\\
c_{s-1,1} 
&c _{s-1,2} 
& \cdots 
& I_{k_{s-1}} + t^{-1}c_{s-1,s-1}\quad
& t^{-\varkappa_{s,s-1}-1} c_{s-1, s} 
\\
c_{s1} 
& c_{s2} 
& \cdots 
&c _{s,s-1}
& I_{k_s} + t^{-1}c_{ss}
\end{pmatrix},
\end{equation}
$C_+(t)= Q_+(t)A_+(t)$.
Here $c_{ij}(t) \in {\mathfrak A}_{-}^{k_i \times k_j}$. We will call this form of the factor $C_-(t)$ canonical.
\end{theorem}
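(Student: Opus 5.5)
The necessity part is exactly Theorem~\ref{sness}: if a $P$-normalised factorisation exists, then $A_-(\infty)$ admits a block PLU factorisation with the same $P$ for every Wiener--Hopf factorisation. In the proof of Theorem~\ref{sness} the permutation produced is the one from the normalised factorisation, $A_-(\infty)=PL_0U_0$. So the real content is sufficiency, together with the canonical form (\ref{canonnormAm}). For sufficiency I will start from a factorisation with $A_-(\infty)=PL_0U_0$ and build $Q_-(t)\in\mathcal Q_-(\rho_1,\ldots,\rho_p)$ step by step, using the group property noted after Definition~\ref{Nor}. Theorem~\ref{genformGK} then guarantees that $C_-=A_-Q_-$, $C_+=Q_+A_+$ is again a Wiener--Hopf factorisation. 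Uniqueness is already given by Theorem~\ref{Bir}.

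First I translate the two conditions of Definition~\ref{canonPnorm} into conditions on $L(t):=P^{-1}C_-(t)$. Write $L$ in blocks and expand each block at infinity, $L(t)=\sum_{k\ge0}L_kt^{-k}$. The $(i,j)$ block of $D^{-1}LD$ is $t^{\varkappa_j-\varkappa_i}L_{ij}(t)$.
\begin{itemize}
\item For $i>j$ this block lies in $\mathfrak A_-$ and vanishes at $\infty$ automatically.
\item For $i<j$ it lies in $\mathfrak A_-$ and vanishes at $\infty$ if and only if $L_{ij}=O(t^{-\varkappa_{ji}-1})$, i.e.\ the coefficients of $t^0,\ldots,t^{-\varkappa_{ji}}$ in $L_{ij}$ vanish.
\item The condition $B_-(\infty)=P$ additionally requires $L_{ii}(\infty)=I_{k_i}$.
\end{itemize}
These requirements are precisely the shape (\ref{canonnormAm}). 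Invertibility of $B_-$ on $\overline D_-$ follows because $\det B_-=\det C_-$ on the finite part and $B_-(\infty)=P$.

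\emph{Step 1 (constant normalisation).} Multiply $A_-$ on the right by $U_0^{-1}$ and then by ${\rm diag}[(L_0)_{11}^{-1},\ldots,(L_0)_{ss}^{-1}]$. Both are constant block upper triangular matrices, hence belong to $\mathcal Q_-$. After this step $P^{-1}A_-(\infty)$ is block lower triangular with identity diagonal blocks. This takes care of the $k=0$ conditions.

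\emph{Step 2 (killing the upper coefficients).} This is the main step. I look for $Q(t)=I+\sum_{m\ge1}N_mt^{-m}$, where $N_m$ is strictly block upper triangular and its $(i,j)$ block may be nonzero only when $m\le\varkappa_{ji}$; such $Q$ lies in $\mathcal Q_-$. The coefficient of $t^{-m}$ in the $(i,j)$ block of $LQ$ is
\[
(N_m)_{ij}+\sum_{l<i}(L_0)_{il}(N_m)_{lj}+R^{(m)}_{ij},
\]
where $R^{(m)}$ involves only $L_1,\ldots,L_m$ and $N_1,\ldots,N_{m-1}$. I proceed by induction on $m=1,2,\ldots,\varkappa_{s1}$. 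For fixed $m$ and column $j$, the set of $i$ with $m\le\varkappa_{ji}$ is an initial segment $i\le i_0(j,m)$, because $\varkappa_{ji}$ decreases in $i$. So the required equations and the allowed unknowns are indexed by the same segment, and the sum over $l<i$ stays inside it. Forward substitution in $i$ therefore solves the system uniquely.

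The obstacle I expect is exactly this compatibility check: the degree restrictions imposed by $\mathcal Q_-$ must match the vanishing conditions, and corrections at level $m$ must not disturb the blocks already normalised at lower levels. The latter holds because multiplying by $N_mt^{-m}$ only affects coefficients of order $\ge m$. Once Step 2 is done, set $C_-=A_-U_0^{-1}\Lambda^{-1}Q$, where $\Lambda={\rm diag}[(L_0)_{11},\ldots,(L_0)_{ss}]$ is the block diagonal matrix used in Step 1. Then $C_-$ has the form (\ref{canonnormAm}), the corresponding $C_+=Q_+A_+$ with $Q_+=D^{-1}Q_-^{-1}D$ for the combined normalisation matrix, and the theorem follows.
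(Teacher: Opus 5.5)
Your proof is correct. Its outer frame matches the paper: necessity from Theorem~\ref{sness}, a constant normalisation by $U^{-1}$, then a polynomial element of $\mathcal Q_-(\rho_1,\ldots,\rho_p)$, and Theorem~\ref{genformGK} for $C_+=Q_+A_+$. The core construction, however, is organised differently.

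\textbf{The paper's route.} It first reduces to $P=I$ by passing to $F=P^{-1}A$. It then inducts on block columns. At step $m+1$ it inverts the already-canonical leading block $G_{11}^{(m)}(t)$ as a matrix function near infinity. The new block column of $Q_{m+1}$ is minus the truncated Laurent expansion of $(G_{11}^{(m)})^{-1}A_{m+1}$. Finally, a direct computation checks that the canonical form satisfies Definition~\ref{canonPnorm}; that every $P$-normalised factorisation has this form then follows from Theorem~\ref{Bir}.

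\textbf{Your route.} You work with $P$ directly and first prove that the two conditions of Definition~\ref{canonPnorm} are \emph{equivalent} to the canonical shape of $P^{-1}C_-$. This gives the ``has the form'' assertion without appealing to uniqueness. You then induct on the order $m$ of the Taylor coefficient at infinity. Each step is a unit block-lower-triangular system whose only matrix coefficient is the constant $P^{-1}C_-(\infty)$. One small point: after Step~1 this constant term is $L_0\Lambda^{-1}$, not $L_0$. Rename it, since forward substitution needs the identity diagonal blocks.

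\textbf{What each buys.} Your version needs no inversion of matrix functions. It makes it immediate that later corrections do not disturb earlier coefficients. It also yields an explicit finite recursion for the coefficients of the normalising matrix in terms of the Taylor coefficients of $A_-$ at $\infty$, which suits implementation. It also avoids a point the paper leaves implicit: multiplying the truncated remainder back by $t^{-1}G_{11}^{(m)}(t)$ must preserve the decay $t^{-\varkappa_{m+1,i}-1}$ in each block row. That holds only because of the canonical shape of $G_{11}^{(m)}$ and the monotonicity of the $\varkappa_i$. The paper's route, in exchange, shows after step $m$ that the leading $(k_1+\cdots+k_m)$-block is already canonical, so one sees when the process can stop early, as in the Example.
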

\begin{proof} \, The necessity of the  condition of the theorem is proved in 
Theorem~\ref{sness}.

Now we prove the sufficiency. First, we prove the existence of $P$-normalised factorisation for $P = I$. This means that the following conditions are satisfied:

\begin{center} 
\emph{all upper left sub-matrices of $A_-(\infty)$ of sizes}
$
k_1\times k_1, (k_1+k_2)\times (k_1+k_2),\ldots, (k_1+k_2+\cdots + k_s)\times (k_1+k_2+\cdots +k_s)  
$
\emph{are non-singular.}
\end{center}
 
We prove the sufficiency by induction on the number $k$ of upper left blocks of $A_-^{(k)}(t)$ in canonical form in the factorisation $A(t) = A_-^{(k)}(t) D(t) A_+^{(k)}(t)$. 

For the case $P = I$ the matrix $A_-(\infty):= A_0$ admits the block LU factorisation:
$
A_0 = L \cdot U,
$
where $L$ is a lower block triangular matrix with $I_{k_1}, \dots, I_{k_s}$ as diagonal elements and $U$ is an upper block triangular matrix. This block LU factorisation is unique.

\noindent\textbf{1st step of the normalising process.}
Let us take $Q_1 = U^{-1}$ as a constant normalising matrix.
Obviously, $Q_1 \in Q_-(\varrho_1, \dots, \varrho_p)$, that is, $Q_1$ is a normalising matrix. Then
\begin{equation}\label{1step}
A(t) = A_-^{(1)}(t) D(t) A_+^{(1)}(t), \quad t\in \Gamma,
\end{equation}
where $A_-^{(1)}(t) = A_-(t) Q_1$, $A_+^{(1)}(t) = D^{-1}(t)Q_1^{-1}D(t)A_+(t)$,
is a new factorisation of $A(t)$. This factorisation is normalised by the condition 
$
A_-^{(1)}(\infty) = A_-(\infty) U^{-1} = L,
$
that is
$$
A_-^{(1)}(t) = \left(
\begin{array}{c|ccc} 
I_{k_1} + t^{-1} A_{11}^{(1)} & t^{-1} A_{12}^{(1)} & \dots & t^{-1} A_{1s}^{(1)} \\
\hline
A_{21}^{(1)} & I_{k_2} + t^{-1} A_{22}^{(1)} & \dots & t^{-1} A_{2s}^{(1)} \\
\vdots & \vdots & \ddots & \vdots \\
A_{s1}^{(1)} & A_{s2}^{(1)} & \dots & I_{k_s} + t^{-1} A_{ss}^{(1)}
\end{array}
\right), \quad A_{ij}^{(1)}(t) \in {\mathfrak A}_{-}^{k_i \times k_j}.
$$

This means that after the first step we obtain the factorisation for which the upper
left block of  size $k_1\times k_1$ 
has the canonical form. For certain matrix functions the process can be finished after this
step.

\noindent\textbf{2nd step of the normalising process.} Now we normalise the 
factorisation (\ref{1step}) so that the upper left block of size 
$(k_1+k_2)\times (k_1+k_2)$ has the canonical form.
Select the upper left block of size $(k_1+k_2)\times (k_1+k_2)$ in the matrix function
$A_-^{(1)}(t)$:
$$
G_{11}^{(1)}(t)=\begin{pmatrix} 
I_{k_1} + t^{-1} A_{11}^{(1)} & t^{-1} A_{12}^{(1)}\\
A_{21}^{(1)} & I_{k_2} + t^{-1} A_{22}^{(1)} \\
\end{pmatrix}
$$
and represent $A_-^{(1)}(t)$ in the block form
\begin{equation}\label{blockA1}
A_-^{(1)}(t)=\left(\begin{array}{c|c}
G_{11}^{(1)}&G_{12}^{(1)}\\
\hline
G_{21}^{(1)}&G_{22}^{(1)}
\end{array}
\right).
\end{equation}

Put 
$
Q_2(t)= \left(\begin{array}{c|c}
Q_{11}^{(2)}&0\\
\hline
0&I_{k_3+\cdots +k_s}
\end{array}
\right),
$
where
$
Q_{11}^{(2)}(t)=\begin{pmatrix}
I_{k_1}&t^{-1}q_{12}^{(2)}\\
0&I_{k_2}
\end{pmatrix}
$
and $q_{12}^{(2)}(t)$ is $k_1\times k_2$ matrix polynomial in $t^{-1}$ of degree at most
$\varkappa_{21}-1$. The coefficients of $q_{12}^{(2)}(t)$ should be determined.
$Q_2(t)$ has the same block structure as $A_-^{(1)}(t)$ in (\ref{blockA1}).
Obviously, $Q_2(t)$ belongs to the group $Q_-(\varrho_1, \dots, \varrho_p)$.

Let us set $A_-^{(2)}(t) = A_-^{(1)}(t) Q_2(t)$, $A_+^{(2)}(t) = D^{-1}(t)Q_2^{-1}(t)D(t)A_+^{(1)}(t)$.
Then
\begin{equation}\label{2step}
A(t) = A_-^{(2)}(t) D(t) A_+^{(2)}(t), \quad t\in \Gamma,
\end{equation}
is the factorisation of $A(t)$. Here
$
A_-^{(2)}=\left(\begin{array}{c|c}
G_{11}^{(1)}Q_{11}^{(2)}&G_{12}^{(1)}\\
\hline
G_{21}^{(1)}Q_{11}^{(2)}&G_{22}^{(1)}
\end{array}
\right).
$
It is not difficult to see that
$$
G_{21}^{(1)}Q_{11}^{(2)}=\begin{pmatrix}
A_{31}^{(2)}&A_{32}^{(2)}\\
\vdots&\vdots\\
A_{s1}^{(2)}&A_{s2}^{(2)}
\end{pmatrix}.
$$
Let us find the block
$$
G_{11}^{(1)}Q_{11}^{(2)}=\begin{pmatrix} 
I_{k_1} + t^{-1} A_{11}^{(1)} \quad
&t^{-1}\left[(I_{k_1} + 
t^{-1} A_{11}^{(1)})q_{12}^{(2)}+A_{12}^{(1)}\right]\\
A_{21}^{(1)} & I_{k_2} + t^{-1} A_{22}^{(1)} \\
\end{pmatrix}.
$$
The matrix function $I_{k_1} + t^{-1} A_{11}^{(1)}(t)$ is invertible in a neighborhood of infinity and we have
$$
t^{-1}\left[(I_{k_1} + 
t^{-1} A_{11}^{(1)})q_{12}^{(2)}+A_{12}^{(1)}\right]=t^{-1}(I_{k_1} + 
t^{-1} A_{11}^{(1)})\left[q_{12}^{(2)}+
(I_{k_1} + t^{-1} A_{11}^{(1)})^{-1}A_{12}^{(1)}\right].
$$
The matrix function $\left(I_{k_1} + t^{-1} A_{11}^{(1)}(t)\right)^{-1}A_{12}^{(1)}(t)$ is analytic
in a neighborhood of infinity. Expand it into the Laurent series and put 
$q_{12}^{(2)}(t)=-P_{\varkappa_{21}-1}(t)$. Here $P_{\varkappa_{21}-1}(t)$
is the corresponding Laurent polynomial for this series. Then matrix function
$
t^{-1}(I_{k_1} + t^{-1} A_{11}^{(1)})\left[q_{12}^{(2)}+(I_{k_1} + t^{-1} A_{11}^{(1)})^{-1}A_{12}^{(1)}\right]
$ 
can be represented as
$t^{-\varkappa_{21}-1}A_{12}^{(2)}(t)$, 
where $A_{12}^{(2)}(t)\in {\mathfrak A}_-^{k_1\times k_2}$.

Thus, we get 
$$
A_-^{(2)}(t) = \left(
\begin{array}{cc|ccc} 
I_{k_1} + t^{-1} A_{11}^{(2)} & t^{-\varkappa_{21}-1} A_{12}^{(2)} 
& t^{-1} A_{13}^{(2)}& \dots & t^{-1} A_{1s}^{(2)} \\
A_{21}^{(2)} & I_{k_2} + t^{-1} A_{22}^{(2)} &t^{-1}A_{23}^{(2)}&  \dots & t^{-1} A_{2s}^{(2)} \\
\hline
A_{31}^{(2)} & A_{32}^{(2)} & I_{k_3} + t^{-1} A_{33}^{(2)} & \dots & t^{-1} A_{2s}^{(2)} \\
\vdots & \vdots & \vdots&\ddots & \vdots \\
A_{s1}^{(2)} & A_{s2}^{(2)} & A_{s3}^{(2)} &\dots & I_{k_s} + t^{-1} A_{ss}^{(2)}
\end{array}
\right),
$$
where all $A_{ij}^{(2)}(t) \in {\mathfrak A}_{-}^{k_i \times k_j}$.

Therefore, using the matrix $Q_-^{(2)}(t)=Q_2(t)Q_1\in Q_-(\varrho_1, \dots, \varrho_p)$ as a normalising matrix, we normalise the factorisation (\ref{factWH}) so that the upper left block of size 
$(k_1+k_2)\times (k_1+k_2)$ of $A_-^{(2)}(t)$ has the canonical form.
For certain matrix functions, the process terminates after this
step.

\noindent\textbf{(m+1)-th step of a normalising process.}
Suppose there exists a normalising matrix 
$Q_-^{(m)}(t) \in {\mathcal Q}_{-}(\varrho_1, \dots, \varrho_p)$, for some $m \ge 2$, that reduces the factorisation (\ref{factWH}) to the factorisation

\begin{equation}\label{mfact}
A(t) = A_-^{(m)}(t) D(t) A_+^{(m)}(t), \quad t\in \Gamma,
\end{equation}
where $A_-^{(m)}(t)$ has the upper left block of size
$(k_1 + \dots + k_m) \times (k_1 + \dots + k_m)$ in the canonical form.

Let us select two blocks in $A_-^{(m)}(t)$: the $(k_1 + \dots + k_m) \times (k_1 + \dots + k_m)$ canonical block, and the next diagonal block that we must reduce to the canonical form.
For brevity, we denote the diagonal elements by 
$D_{jj}^{(m)}(t):=I_{k_j}+t^{-1}A_{jj}^{(m)}(t)$.
Now we have
\begin{multline*}
A_-^{(m)}(t)=\\ 
\left(
\begin{array}{cccc|c|ccc}
D_{11}^{(m)}
& t^{-\varkappa_{21}-1}A_{12}^{(m)}
& \cdots
& t^{-\varkappa_{m 1}-1}A_{1m}^{(m)}
& t^{-1}A_{1,m+1}^{(m)}
& t^{-1}A_{1,m+2}^{(m)}
& \cdots
& t^{-1}A_{1,s}^{(m)}
\\
A_{21}^{(m)}
& D_{22}^{(m)}
& \cdots
& t^{-\varkappa_{m 2}-1}A_{2m}^{(m)}
& t^{-1}A_{2,m+1}^{(m)}
& t^{-1}A_{2,m+2}^{(m)}
& \cdots
& t^{-1}A_{2,s}^{(m)}
\\
\vdots & \vdots & \ddots & \vdots & \vdots  & \vdots&&\vdots
\\
A_{m 1}^{(m)}
& A_{m 2}^{(m)}
& \cdots
& D_{mm}^{(m)}
& t^{-1}A_{m,m+1}^{(m)}
& t^{-1}A_{m,m+2}^{(m)}
& \cdots
& t^{-1}A_{m, s}^{(m)}
\\
\hline
A_{m+1,1}^{(m)}
& A_{m+1,2}^{(m)}
& \cdots
& A_{m+1,m}^{(m)}
& D_{m+1,m+1}^{(m)}
& t^{-1}A_{m+1,m+2}^{(m)}
& \cdots
& t^{-1}A_{m+1,s}^{(m)}
\\
\hline
A_{m+2,1}^{(m)}
& A_{m+2,2}^{(m)}
& \cdots
& A_{m+2,m}^{(m)}
&A_{m+2,m+1}^{(m)}
& D_{m+2,m+2}^{(m)}
&\cdots
& t^{-1}A_{m+2,s}^{(m)}
\\
\vdots & \vdots & \ddots & \vdots & \vdots &\vdots &\ddots & \vdots
\\
A_{s, 1}^{(m)}
& A_{s, 2}^{(m)}
& \cdots
& A_{s,m}^{(m)}
& A_{s,m+1}^{(m)}
& A_{s,m+2}^{(m)}
& \cdots
& D_{ss}^{(m)}
\end{array}
\right)
\end{multline*}

Thus, we obtain the following block form of $A_-^{(m)}(t)$:
\begin{equation}\label{mFactWH}
A_-^{(m)}(t)=\left(\begin{array}{c|c|c}
G_{11}^{(m)}&G_{12}^{(m)}&G_{13}^{(m)}\\
\hline
G_{21}^{(m)}&G_{22}^{(m)}&G_{23}^{(m)}\\
\hline
G_{31}^{(m)}&G_{32}^{(m)}&G_{33}^{(m)}
\end{array}
\right).
\end{equation}
Now we  form a normalising matrix $Q_{m+1}(t)\in {\mathcal Q}_{-}(\varrho_1, \dots, \varrho_p)$ with the same block structure as $A_-^{(m)}(t)$ in (\ref{mFactWH}):
$$
Q_{m+1}(t) =
\left(\begin{array}{c|c|c}
I_{k_1+\ldots+k_m}&t^{-1}q_{m+1}&0\\
\hline
0&I_{k_{m+1}}
&0\\
\hline
0&0&I_{k_{m+2} + \dots + k_s}
\end{array}
\right), {\mbox\ {\rm where}\ \ }
q_{m+1}(t) =
\begin{pmatrix}
q_{1,m+1}^{(m+1)} \\
\vdots \\
q_{m,m+1}^{(m+1)} \\
\end{pmatrix},
$$
and $q_{ij}^{(m+1)}(t)$ is $k_i\times k_j$ matrix polynomial in $t^{-1}$ of degree at most
$\varkappa_{ji}-1$.

Let us normalise the factorisation (\ref{mfact}) by $Q_{m+1}(t) \in Q_{-}(\varrho_1, \dots, \varrho_p)$:
\begin{equation}\label{mplus1fact}
A(t) = A_-^{(m+1)}(t) D(t) A_+^{(m+1)}(t), \quad t\in \Gamma.
\end{equation}
Here $A_-^{(m+1)}(t)=A_-^{(m)}(t)Q_{m+1}(t), \quad 
A_+^{(m+1)}(t) = D^{-1}(t)Q_{m+1}^{-1}(t)D(t)A_+^{(m)}(t).
$

Then for $A_-^{(m+1)}(t)=A_-^{(m)}(t)Q_{m+1}(t)$ we have:
$$
A_-^{(m+1)}(t)=
\left(
\begin{array}{c|c|c}
G_{11}^{(m)}&t^{-1}G_{11}^{(m)}q_{m+1}+G_{12}^{(m)}&G_{13}^{(m)}\\
\hline
G_{21}^{(m)}&t^{-1}G_{21}^{(m)}q_{m+1}+G_{22}^{(m)}&G_{23}^{(m)}\\
\hline
G_ {31}^{(m)}&t^{-1}G_{31}^{(m)}q_{m+1}+G_{32}^{(m)}&G_{33}^{(m)}\\
\end{array}
\right),
$$
where the block
$$
t^{-1}G_{21}^{(m)}q_{m+1}+G_{22}^{(m)}=
t^{-1}\left(
A_{m+1,1}^{(m)}, \cdots, A_{m+1,m}^{(m)}\right)
\begin{pmatrix}
q_{1,m+1}^{(m+1)} \\
\vdots \\
q_{m,m+1}^{(m+1)} \\
\end{pmatrix}
+D_{m+1,m+1}^{(m)}$$ has the  form $D_{m+1,m+1}^{(m+1)}$,
and
$
t^{-1}G_{31}^{(m)}q_{m+1}+G_{32}^{(m)}=
\begin{pmatrix}
A_{m+2,m+1}^{(m+1)}\\
\vdots\\
A_{s,m+1}^{(m+1)}
\end{pmatrix}.
$

It  remains to find the block $t^{-1}G_{11}^{(m)}(t)q_{m+1}(t)+G_{12}^{(m)}(t)$ in $A_-^{(m+1)}$.

Let us write the block $G_{12}^{(m)}(t)$ in the form:
$G_{12}^{(m)}(t)=t^{-1}A_{m+1}(t)$, where
$A_{m+1}(t)=\begin{pmatrix}
A_{1,m+1}^{(m)}\\
\vdots\\
A_{m,m+1}^{(m)}\\
\end{pmatrix}.
$
Now 
$$
t^{-1}G_{11}^{(m)}(t)q_{m+1}(t)+G_{12}^{(m)}(t)= t^{-1}\left[G_{11}^{(m)}(t)q_{m+1}(t)+
A_{m+1}(t)\right].
$$
Since the matrix $G_{11}^{(m)}(\infty)=
\begin{pmatrix}
I_{k_1}&0&\dots&0\\
*&I_{k_2}&\dots&0\\
\vdots&\vdots&\ddots&\vdots\\
*&*&\dots&I_{k_m}
\end{pmatrix}
$ is invertible, the matrix function $G_{11}^{(m)}(t)$ is invertible in a neighborhood of infinity
and
$$t^{-1}G_{11}^{(m)}(t)q_{m+1}(t)+G_{12}^{(m)}(t)= t^{-1}G_{11}^{(m)}(t)
\left[q_{m+1}(t)+(G_{11}^{(m)}(t))^{-1}A_{m+1}(t)\right].$$
Here the block column $(G_{11}^{(m)}(t))^{-1}A_{m+1}(t)$ is analytic at infinity.
We expand all the block elements of this column into Laurent series in a neighborhood of infinity and represent $(G_{11}^{(m)}(t))^{-1}A_{m+1}(t)$ in the following form:
$$
(G_{11}^{(m)}(t))^{-1}A_{m+1}(t)=
\begin{pmatrix}
P_{\varkappa_{m+1,1}-1}(t)+t^{-\varkappa_{m+1,1}}O_{1,m+1}(t)\\
\vdots\\
P_{\varkappa_{m+1,m}-1}(t)+t^{-\varkappa_{m+1,m}}O_{m,m+1}(t)\\
\end{pmatrix}.
$$
Here $P_{\varkappa_{m+1,j}-1}(t)$ is the corresponding Laurent polynomial
of the series and $O_{j,m+1}(t)\in {\mathfrak A}^{k_j\times k_{m+1}}$.
Now we set $q_{j,m+1}^{(m+1)}(t)= -P_{\varkappa_{m+1,j}-1}(t)$ and arrive 
at the following equality:
$$
t^{-1}G_{11}^{(m)}(t)q_{m+1}(t)+G_{12}^{(m)}(t)=
\begin{pmatrix}
t^{-\varkappa_{m+1,1}-1}A_{1,m+1}^{m+1}(t)\\
\vdots\\
t^{-\varkappa_{m+1,m}-1}A_{m,m+1}^{m+1}(t)
\end{pmatrix}:=t^{-1}G_{11}^{(m)}(t)
\begin{pmatrix}
t^{-\varkappa_{m+1,1}}O_{1,m+1}(t)\\
\vdots\\
t^{-\varkappa_{m+1,m}}O_{m,m+1}(t)\\
\end{pmatrix}.
$$

Therefore, the normalising matrix $Q_-^{(m)}(t)Q_{m+1}(t)\in Q_-(\varrho_1, \dots, \varrho_p)$ reduces the factorisation (\ref{factWH}) to the factorisation (\ref{mplus1fact}) in which the upper left block of size 
$(k_1+\cdots +k_{m+1})\times (k_1+\cdots +k_{m+1})$ of $A_-^{(m+1)}(t)$ has the canonical form. For the case $P=I_p$, by induction we get the factorisation (\ref{canonnormfact}) with the factor $A_-(t)$ in the canonical form (\ref{canonnormAm}).
It remains to  verify that this factorisation is an I-normalised factorisation.

Direct calculation shows that 
\begin{multline*}
B_-(t)=D^{-1}(t)C_-(t)D = \\
\begin{pmatrix}
I_{k_1} + t^{-1}c_{11} 
& t^{-1} c_{12}  
& \cdots 
& t^{-1} c_{1,s-1}
& t^{-1} c_{1s} \\
t^{-\varkappa_{21}}c_{21} 
& I_{k_2} + t^{-1}c_{22}  
& \cdots 
& t^{-1} c_{2,s-1}
& t^{-1} c_{2s} \\
\vdots & \vdots & \ddots & \vdots & \vdots\\
t^{-\varkappa_{s-1,1}}c_{s-1,1} 
&\ t^{-\varkappa_{s-1,2}}c _{s-1,2} 
& \cdots 
& I_{k_{s-1}} + t^{-1}c_{s-1,s-1}\quad
& t^{-1} c_{s-1, s} 
\\
t^{-\varkappa_{s,1}}c_{s1} 
& t^{-\varkappa_{s,2}}c_{s2} 
& \cdots 
&t^{-\varkappa_{s,s-1,1}}c _{s,s-1}
& I_{k_s} + t^{-1}c_{ss}
\end{pmatrix},
\end{multline*}
and conditions (1), (2) of Definition \ref{canonPnorm} are fulfilled. 
Hence the factorisation with the factor $C_-(t)$ in the canonical form is indeed an 
I-normalised factorisation.

Now we consider the general case. Suppose that for  $A(t)$ there exists a factorisation $$A(t)=A_-(t)D(t)A_+(t),$$ 
where $A_-(\infty)$ admits a block PLU factorisation with some permutation $P$:
$A_-(\infty)=PLU$.
Note that in this case any factorisation of $A(t)$ has this property.

Let us define the matrix function $F(t)=P^{-1}A(t)$. Then 
$$F(t)=(P^{-1}A_-(t))D(t)A_+(t)$$ 
is the right Wiener--Hopf factorisation of $F(t)$, and for $F_-(t)=P^ {-1}A_-(t)$ the condition $F_-(\infty)=LU$ is satisfied.
By the part of the theorem already proved, $F(t)$ admits the $I$-normalised Wiener--Hopf factorisation $F(t)=K_-(t)D(t)F_+(t)$. Then $A(t)=(PK_-(t))D(t)F_+(t)$ is the $P$-normalised factorisation of $A(t)$. The theorem is completely proved.
\end{proof}

\begin{example} 
Let $A(t)= \begin{pmatrix}
36t^2+17t-14&t^2&\quad t+1\\
t+1&t^2&\quad t+1\\
t^4-t^2+3t-1&\quad t^2+13t+15&\quad t\\

\end{pmatrix}$. Then
\begin{multline*}
A(t)=A_-(t)D(t)A_+(t)=
\begin{pmatrix}
1&\quad0&-1\\
1&\quad 0&\quad \frac{4}{9t}-\frac{5}{12t^2}\\
0&\quad 1&0\\
\end{pmatrix}\cdot 
\begin{pmatrix}
1&\quad 0&\quad 0\\
0&\quad 1&\quad 0\\
0&\quad 0&\quad t^2
\end{pmatrix}\cdot\\
\begin{pmatrix}
17t-14&\quad t^2&\quad t+1\\
t^4-t^2+3t-1&\quad t^2+13t+15&\quad t\\
-36&0&\quad 0
\end{pmatrix}
\end{multline*}
is the right Wiener--Hopf factorisation of $A(t)$. This factorisation was obtained using our package ExactMPF \citep{AAM22} that is designed for the construction of the exact factorisation of matrix polynomials. From the factorisation we see that $\varkappa_1=0$, 
$k_1=2$; $\varkappa_2=2$, $k_2=1$. 

Consider the matrix $A_-(\infty)$ with the block structure 
$\{k_1,k_2\}=\{2, 1\}$:
$$
A_-(\infty)=\left(\begin{array}{cc|c}
1&0&-1\\
1&0&0\\
\hline
0&1&0
\end{array}\right).
$$
For the permutation $P=P^{-1}=\begin{pmatrix}
1&0&0\\
0&0&1\\
0&1&0
\end{pmatrix}$ we have
$
P^{-1}A_-(\infty)=\left(\begin{array}{cc|c}
1&0&-1\\
0&1&0\\
\hline
1&0&0
\end{array}\right).
$
Hence $A_-(\infty)$ admits the block PLU factorisation: 
$$A_-(\infty)=P\cdot L \cdot U=P\cdot
\left(\begin{array}{cc|c}
1&0&0\\
0&1&0\\
\hline
1&0&1
\end{array}\right)\cdot
\left(\begin{array}{cc|c}
1&0&-1\\
0&1&0\\
\hline
0&0&1
\end{array}\right),
$$ 
and the  matrix function $A(t)$ admits the $P$-normalised factorisation.
Define the matrix function $F(t)=P^{-1}A(t)$. Then $F(t)$ admits the factorisation
\begin{multline*}
F(t)= F_-(t)D(t)F_+(t)=
\begin{pmatrix}
1&\quad 0&-1\\
0&\quad 1&0\\
1&\quad 0&\quad \frac{4}{9t}-\frac{5}{12t^2}\\
\end{pmatrix}\cdot 
\begin{pmatrix}
1&\quad 0&\quad 0\\
0&\quad 1&\quad 0\\
0&\quad 0&\quad t^2
\end{pmatrix}\cdot\\
\begin{pmatrix}
17t-14&\quad t^2&\quad t+1\\
t^4-t^2+3t-1&\quad t^2+13t+15&\quad t\\
-36&0&\quad 0
\end{pmatrix}.
\end{multline*}

Our goal is to I-normalise this factorisation.

\noindent\textbf{1st step of the normalising process.}
Let us take $Q_1 = U^{-1}= \left(\begin{array}{cc|c}
1&0&1\\
0&1&0\\
\hline
0&0&1
\end{array}\right)$.

Then $F(t)=F_-^{(1)}(t)D(t)F_+^{(1)}(t)$, where
$$
F_-^{(1)}(t)=F_-(t)Q_1=\begin{pmatrix}
1&\quad 0&0\\
0&\quad 1&0\\
1&\quad 0&\quad 1+\frac{4}{9t}-\frac{5}{12t^2}\\
\end{pmatrix},
$$
$$
F_+^{(1)}(t)=D^{-1}(t)Q_1D(t)F_+(t)=\begin{pmatrix}
36t^2+17t-14&t^2&t+1\\
t^4-t^2+3t-1&t^2+13t+15&t\\
-36&0&9   
\end{pmatrix}.
$$

We see that $F_-^{(1)}(t)$ has the canonical form and the normalising process for $F(t)$terminates at the first step.

Now $A(t)=(PF_-^{(1)}(t))D(t)F_+^{(1)}(t)$ is a $P$-normalised factorisation of $A(t)$. 
\end{example}

\section{Conclusions and perspectives}\label{sec5}

In this article, we proposed the definition of $P$-normalised Wiener--Hopf factorisation and demonstrated that this normalisation guarantees the uniqueness of the factorisation. We presented a finite theoretical normalising process for constructing this factorisation, thereby filling the gap that previously existed in the general theory of factorisation. The primary tools used to achieve this result were the Gohberg--Krein theorem on the general form of factorisation factors, the Birkhoff factorisation of matrix functions, and the block PLU factorisation of numerical matrices. However, in this paper, we did not address the numerical aspects of our normalisation process. 

Regarding this problem, we make the following remarks.
  
\begin{enumerate}

\item To initiate the normalising process, an initial factorisation $A(t)=A_-(t)D(t)A_+(t)$ of the matrix function $A(t)$ must be available. However, there are no explicit methods for finding the Wiener--Hopf factorisation for arbitrary matrix functions, which remains a central problem in factorisation theory. Nevertheless, there are several classes of matrix functions for which the problem of explicit construction has been solved, and the normalising procedure can be applied to them. These include triangular matrix functions \citep{Adukov82, PR18}, as well as meromorphic and analytic matrix functions \citep{Adukov92, Adukov99}. In particular, explicit formulas for the partial indices and factorisation factors $A_\pm(t)$ exist for matrix polynomials. This class is crucial for the approximate construction of the factorisation. Currently, the \texttt{ExactMPF} package allows for the exact solution of the factorisation problem for matrix polynomials \citep{AAM22}. Our first priority is to incorporate the $P$-normalisation option into this package. 

\item Knowing $A_-(\infty)$ and the partial indices, we can find a permutation $P$ for which the matrix $P^{-1}A_-(\infty)$ has non-singular upper left sub-matrices of sizes
$$
k_1\times k_1, (k_1+k_2)\times (k_1+k_2),\ldots, (k_1+k_2+\cdots + k_s)\times (k_1+k_2+\cdots +k_s).  
$$
These conditions can be achieved by rearranging the rows of the matrix $A_-(\infty)$. It is clear that the permutation $P$ is not unique. In particular, we can choose it such that all upper corner minors are non-zero. This permutation determines the type of normalisation.

\item Next, we must construct the block LU factorisation of $P^{-1}A_-(\infty)$. Algorithms for such factorisations are well developed in numerical linear algebra \citep[Ch. 4]{TB97}. Moreover, modern computer algebra systems allow one to obtain $P$, $L$, and $U$ easily.

\item Subsequently, we can sequentially perform the finite sequence  of steps described in the proof of Theorem~\ref{basethm} to construct the $P$-normalised factorisation.

\item It is straightforward to prove that the type of $P$-normalisation is stable under small perturbations of the original matrix function $A(t)$. This means that for sufficiently small $\delta>0$, any matrix function $\widetilde A(t)$ satisfying the following conditions:
\begin{itemize}
\item $\widetilde A(t)$ has the same set of right partial indices $\rho_1,\ldots,\rho_p$ as $A(t)$;

\item $\|A(t)-\widetilde A(t)\|<\delta$,
\end{itemize}

has the same $P$-type normalisation as $A(t)$. However, we currently lack an effective estimate for the value of $\delta$. Without such an estimate, the normalisation cannot be utilized in an approximate factorisation process. Our second priority is to derive such an effective estimate.

\item Furthermore, knowing the $P$-normalised factorisation of $A(t)$, we will be able to obtain effective estimates for $\|A_\pm-{\widetilde A}_\pm\|$. This constitutes our third task. By solving it, we will be able to obtain an approximate factorisation with guaranteed accuracy for certain classes of matrix functions. 
\end{enumerate}  

We intend to address these problems in our future work.


\section*{Funding}
This work is supported by  the Russian Science Foundation,  project \#25-21-20134. 


\bibliographystyle{abbrvnat}
\bibliography{AdukovsRefs_New}

\end{document}